\documentclass[12pt]{amsart}
\usepackage{amssymb}
\usepackage{amsmath}
\numberwithin{equation}{section}
%\numberwithin{section}{part}
\setcounter{tocdepth}{1}

\usepackage{breakcites}
\usepackage{color}
\usepackage{graphicx}
\usepackage{epstopdf}
\usepackage{float}
\usepackage{hyperref}

\newtheorem{thm}{Theorem}
\newtheorem{lemma}[thm]{Lemma}
\newtheorem{cor}[thm]{Corollary}
\newtheorem{prop}[thm]{Proposition}

\theoremstyle{definition}

\theoremstyle{remark}

\def\x{{\mathbf x}}
\def\K{{\mathbf K}}
\def\P{{\mathbb P}}

\begin{document}

\title[Schur positivity of trees]{A counterexample to a conjecture on Schur positivity of chromatic symmetric functions of trees}

\author{Emmanuella Sandratra Rambeloson}
\address{African Institute of Mathematical Sciences \\ Accra, Ghana}
\email{emmanuella@aims.ac.za}

\author{John Shareshian}
\address{Washington University \\ St Louis, MO, USA}
\email{jshareshian@wustl.edu}

\maketitle

\begin{abstract}
We show that no tree on twenty vertices with maximum degree ten has Schur positive chromatic symmetric function, thereby providing a counterexample to a conjecture from \cite{DSW}.
\end{abstract}

Among the many nice results on chromatic symmetric functions in the paper \cite{DSW} of Dahlberg, She and van Willigenburg is Theorem 39 therein, which says that no bipartite graph on $n$ vertices with a vertex of degree more than $\lceil \frac{n}{2} \rceil$ has Schur positive chromatic symmetric function.  In particular, Theorem 39 applies to trees.  A near-converse to Theorem 39 for trees is posed in \cite[Conjecture 42]{DSW}, which says that for every $n \geq 2$, there is a tree $T$ on $n$ vertices, one of which has degree $\lfloor \frac{n}{2} \rfloor$, such that the chromatic symmetric function of $T$ is Schur positive.  The authors of \cite{DSW} confirmed this conjecture for $n \leq 19$, using computer calculations.  Sadly, the conjecture is false for $n=20$, as we show here. We use SageMath \cite{SAGE} calculations after a preparatory proposition that reduces the number of trees that we must examine.

We give the requisite definitions and reiterate more formally.  Given a (finite, loopless, simple) graph $G=(V,E)$, a {\sf proper coloring} of $G$ is a function $\kappa$ from $V$ to the set $\P$ of positive integers such that $\kappa(v) \neq \kappa(w)$ whenever $\{v,w\} \in E$.  We fix an infinite set $\x:=\{x_i:i \in \P\}$ of pairwise commuting variables, and write $\K(G)$ for the set of all proper colorings of $G$. To each proper coloring $\kappa$ one associates a monomial 
$$
\x^\kappa:=\prod_{v \in V}x_{\kappa(v)}.
$$
The {\sf chromatic symmetric function} $X_G$ of $G$ is the sum of all such monomials,
$$
X_G(\x):=\sum_{\kappa \in \K(G)}\x^\kappa.
$$
Chromatic symmetric functions were introduced by Stanley in \cite{StCSF} and have drawn considerable attention.  Various results and conjectures, including the above-mentioned theorem and conjecture from \cite{DSW}, relate the structure of $G$ to the expansion of $X_G$ in terms of one or more familiar bases for the algebra $\Lambda$ of symmetric functions.   Recall that if $B$ is a basis for $\Lambda$ and $f \in \Lambda$, we call $f$ {\sf $B$-positive} if, when we expand $f=\sum_{b \in B}\alpha_b b$, each $\alpha_b$ is non-negative.  The {\sf Schur basis} for $\Lambda$ is a fundamental object in symmetric function theory. See for example \cite[Chapter 7]{StEC} for basic properties of Schur functions and other rudimentary facts about symmetric functions that will be used herein without reference.

%\begin{thm}[Theorem 39 of \cite{DSW}] \label{dswth}
%If $G=(V,E)$ is bipartite with $|V|=n$ and some $v \in V$ has degree more than $\lceil \frac{n}{2} \rceil$, then $X_G(\x)$ is not Schur positive.
%\end{thm}

%\begin{con}[Conjecture 42 of \cite{DSW}] \label{dswcon}
%For every $n \geq 2$, there is a tree $T$ with $n$ vertices, at least one of which has degree $\lfloor \frac{n}{2} \rfloor$, such that $X_T(\x)$ is Schur positive.
%\end{con}

We prove the following result, thereby disproving Conjecture 42 of \cite{DSW}.

\begin{thm} \label{main}
If $T$ is a tree on twenty vertices, one of which has degree ten, then $X_T(\x)$ is not Schur positive.
\end{thm}

A {\sf stable partition} of $G$ is a set partition $\pi:V=\bigcup_{j=1}^k \pi_j$ with each $\pi_j$ an independent set in $G$.  We assume without loss of generality that $|\pi_j| \geq |\pi_{j+1}|$ for each $j \in [n-1]$.   Setting $\lambda_j=|\pi_j|$ for each $j$, we get that $\lambda:=(\lambda_1,\ldots,\lambda_k)$ is a partition of the integer $|V|$. We call $\lambda$ the {\sf type} of $\pi$.  Given another partition $\mu=(\mu_1,\ldots,\mu_\ell)$ of $|V|$, we write $\mu \preceq \lambda$ if $\lambda$ {\sf dominates} $\mu$, that is, if $\sum_{j=1}^m \mu_j \leq \sum_{j=1}^m \lambda_j$ for all $m \in [k]$.  Our proof of Theorem \ref{main} rests on the following basic result, due to Stanley.  This result follows quickly from the fact that if $\mu \preceq \lambda$, then when the Schur function $s_\lambda$ is expanded in the monomial basis, the coefficient of $m_\mu$ is positive.
\begin{lemma}[Proposition 1.5 of \cite{StCSF2}] \label{domlem}
If $X_G(\x)$ is Schur positive and $G$ admits a stable partition of type $\lambda$, then $G$ admits a stable partition of type $\mu$ whenever $\mu \preceq \lambda$.
\end{lemma}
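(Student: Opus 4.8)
The plan is to pass through the monomial symmetric function basis, exploiting the fact that $G$ admits a stable partition of a given type precisely when a corresponding monomial coefficient of $X_G$ is positive. First I would record the monomial expansion of the chromatic symmetric function. Grouping proper colorings by the stable partition of $V$ into color classes that they induce, each stable partition $\pi$ of type $\lambda$ with, say, $k$ parts of sizes $\lambda_1 \geq \cdots \geq \lambda_k$ contributes exactly $\sum x_{c_1}^{\lambda_1} \cdots x_{c_k}^{\lambda_k}$, the sum running over all injective assignments $c$ of distinct colors to the parts. This sum is the augmented monomial symmetric function $\tilde m_\lambda = \bigl(\prod_i r_i(\lambda)!\bigr)\, m_\lambda$, where $r_i(\lambda)$ counts the parts of $\lambda$ equal to $i$. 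Hence $X_G = \sum_\lambda a_\lambda \bigl(\prod_i r_i(\lambda)!\bigr)\, m_\lambda$, where $a_\lambda \geq 0$ is the number of stable partitions of type $\lambda$. Since the factors $\prod_i r_i(\lambda)!$ are strictly positive, the crucial observation is that $G$ admits a stable partition of type $\lambda$ if and only if $a_\lambda > 0$, equivalently if and only if the coefficient of $m_\lambda$ in $X_G$ is positive.

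Next I would bring in Schur positivity. Write $X_G = \sum_\nu c_\nu s_\nu$ with every $c_\nu \geq 0$, the sum running over partitions $\nu$ of $|V|$. Expanding each Schur function in the monomial basis as $s_\nu = \sum_\rho K_{\nu\rho} m_\rho$, the coefficient of $m_\rho$ in $X_G$ equals $\sum_\nu c_\nu K_{\nu\rho}$. The key input, which is exactly the fact quoted just before the statement, is that the coefficient $K_{\nu\rho}$ is positive precisely when $\rho \preceq \nu$.

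Now I would run the dominance argument. Since $G$ admits a stable partition of type $\lambda$, the coefficient $\sum_\nu c_\nu K_{\nu\lambda}$ of $m_\lambda$ in $X_G$ is positive; as every $c_\nu$ is non-negative, some $\nu$ must satisfy both $c_\nu > 0$ and $K_{\nu\lambda} > 0$, whence $\lambda \preceq \nu$. Given any $\mu \preceq \lambda$, transitivity of the dominance order yields $\mu \preceq \nu$, so $K_{\nu\mu} > 0$. Therefore the coefficient of $m_\mu$ in $X_G$, namely $\sum_{\nu'} c_{\nu'} K_{\nu'\mu} \geq c_\nu K_{\nu\mu} > 0$, is positive, every summand being non-negative. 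By the first step this positivity means $a_\mu > 0$, so $G$ admits a stable partition of type $\mu$, as desired.

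The main obstacle is really the bookkeeping in the first step: one must check cleanly that the condition ``$G$ admits a stable partition of type $\lambda$'' is faithfully detected by positivity of the $m_\lambda$-coefficient of $X_G$, despite the multiplicative factors relating $\tilde m_\lambda$ to $m_\lambda$. Once it is observed that these factors are strictly positive and hence do not affect positivity, the translation is harmless, and the remainder is a short chase through the dominance order using only non-negativity of the $c_\nu$ together with the standard positivity criterion for Kostka numbers.
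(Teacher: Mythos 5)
Your proof is correct and follows exactly the route the paper sketches for this lemma (which it otherwise defers to Stanley): expand $X_G$ in the augmented monomial basis so that stable partition types are detected by positive $m_\lambda$-coefficients, then combine Schur positivity with the positivity criterion for Kostka numbers and transitivity of dominance. The only point worth noting is that you correctly invoke both directions of the criterion ($K_{\nu\rho}>0$ if and only if $\rho \preceq \nu$), whereas the paper's one-line hint states only the ``if'' direction; your argument is the faithful completion of that sketch, not a different approach.
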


\begin{cor} \label{maincor}
Assume that $T=(V,E)$ is a tree on $2n$ vertices and $v \in V$ has degree $n$ in $T$.  If $X_T(\x)$ is Schur positive, then every $x \in V$ that is neither $v$ nor a neighbor of $v$ is a leaf in $T$.
\end{cor}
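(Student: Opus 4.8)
The plan is to extract from Schur positivity, via Lemma~\ref{domlem}, the existence of a perfectly balanced stable partition, and then to use the rigidity of proper $2$-colorings of connected bipartite graphs to pin down the structure of $T$ near $v$. First I would root $T$ at $v$ and record the stable partition coming from the bipartition of the connected bipartite graph $T$: let $A$ be the set of vertices at even distance from $v$ and $B$ the set at odd distance, so that $\{A,B\}$ is a stable partition with $v \in A$. Since every neighbor of $v$ lies in $B$, we have $|B| \geq n$, whence $|A| \leq n$ because $|A|+|B| = 2n$. Thus $T$ admits a stable partition of type $\lambda = (|B|,|A|)$.

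Next I would observe that $(n,n) \preceq \lambda$: the first partial sums satisfy $n \leq |B|$, and the second partial sums agree, both equalling $2n$. Hence Lemma~\ref{domlem} applies, and Schur positivity of $X_T(\x)$ forces $T$ to admit a stable partition of type $(n,n)$, that is, a partition of $V$ into two independent sets each of size $n$. The crux is then to identify this new partition: because $T$ is connected and bipartite, its partition into two independent sets is unique, the proper $2$-coloring being unique up to swapping the two colors. Therefore the type-$(n,n)$ stable partition must be $\{A,B\}$ itself, which upgrades the mere existence of such a partition into the rigid equality $|A| = |B| = n$.

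Finally, I would read off the conclusion. Since $N(v) \subseteq B$ and $|N(v)| = n = |B|$, we get $B = N(v)$; that is, the only vertices at odd distance from $v$ are its neighbors, at distance exactly $1$. In a connected graph the set of distances realized from $v$ is an initial segment of the nonnegative integers, so the absence of any vertex at distance $3$ forces every vertex to lie within distance $2$ of $v$. Consequently any $x$ that is neither $v$ nor a neighbor of $v$ sits at distance exactly $2$; its only possible neighbors in the rooted tree are its parent, at distance $1$, and its children, at distance $3$, and as the latter do not exist, $x$ is a leaf.

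I expect the main obstacle to be the middle step: choosing $(n,n)$ as the comparison partition and, more importantly, recognizing that the uniqueness of the bipartition of a connected bipartite graph converts the existence of a type-$(n,n)$ stable partition into the exact equality $|A|=|B|=n$. Once that equality is in hand the structural conclusion that all far vertices are leaves follows immediately, so the argument requires no computation beyond these two observations.
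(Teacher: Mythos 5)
Your proof is correct and follows essentially the same route as the paper: both arguments pass to the unique bipartition of the connected bipartite graph $T$, use Lemma~\ref{domlem} with the dominance $(n,n)\preceq\lambda$ to force that bipartition to have type $(n,n)$, conclude that the part not containing $v$ equals $N_T(v)$, and rule out vertices at distance three from $v$. The only difference is that you spell out the details (the dominance check, the uniqueness argument, and the distance bookkeeping) that the paper's proof leaves implicit.
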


\begin{proof}
As $T$ is connected and bipartite, $T$ has a unique bipartition $\pi:V=\pi_1 \cup \pi_2$.  If $X_T(\x)$ is Schur positive, then $\pi$ has type $(n,n)$ by Lemma \ref{domlem}.  We assume without loss of generality that $v \in \pi_1$.  Then the neighborhood $N_T(v)$ is contained in $\pi_2$ and so $\pi_2=N_T(v)$.  Were the claim of the corollary false, some $z \in V$ would be at distance three from $v$ in $T$ and therefore lie in $\pi_2$, which is impossible.     
\end{proof}

For each partition $\nu=(\nu_1,\ldots,\nu_t)$ of $n-1$, let $T(\nu)$ be a tree on $2n$ vertices in which one vertex $v$ has exactly $n$ neighbors $v_1,\ldots,v_n$, and for $1 \leq i \leq t$, $v_i$ has exactly $\nu_i$ neighbors other than $v$ (each of which is necessarily a leaf).  The next result follows immediately from Corollary \ref{maincor}.

\begin{cor} \label{maincor2}
If $T$ is a tree on $2n$ vertices, one of which has degree $n$, and $X_T(\x)$ is Schur positive, then there is some partition $\nu$ of $n-1$ such that $T$ is isomorphic with $T(\nu)$.
\end{cor}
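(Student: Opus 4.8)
The plan is to read off the full shape of $T$ directly from Corollary \ref{maincor}. Suppose $T=(V,E)$ is a tree on $2n$ vertices, let $v \in V$ have degree $n$ with neighbors $v_1,\ldots,v_n$, and assume $X_T(\x)$ is Schur positive. First I would count vertices: $v$ together with its $n$ neighbors accounts for $n+1$ of the $2n$ vertices, leaving a set $W$ of exactly $n-1$ vertices, none of which is $v$ or a neighbor of $v$. Corollary \ref{maincor} then guarantees that every vertex of $W$ is a leaf of $T$.

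Next I would determine to which vertex each $w \in W$ is attached. Since $w$ is a leaf, it has a unique neighbor $u$. I would rule out $u=v$ (else $w$ would be a neighbor of $v$) and $u \in W$ (else $u$ would be a leaf whose only neighbor is $w$, so $\{u,w\}$ would form a connected component of $T$, contradicting the connectedness of the tree on $2n \geq 2$ vertices). Hence $u=v_i$ for some $i$, so each vertex of $W$ is a leaf pendant to one of the neighbors $v_1,\ldots,v_n$.

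Finally I would package the attachment data as a partition. Setting $\nu_i$ equal to the number of vertices of $W$ adjacent to $v_i$ (equivalently, the number of neighbors of $v_i$ other than $v$), I obtain $\sum_{i=1}^n \nu_i = |W| = n-1$. Reindexing so that $\nu_1 \geq \cdots \geq \nu_n$ and discarding the zero entries yields a partition $\nu=(\nu_1,\ldots,\nu_t)$ of $n-1$, and the resulting labeled description of $T$ matches that of $T(\nu)$ verbatim, so $T \cong T(\nu)$. I do not expect any genuine obstacle here: essentially all of the force is already in Corollary \ref{maincor}, and the only point needing a moment's care is excluding the possibility that a leaf of $W$ attaches to another leaf of $W$, which connectedness of $T$ settles immediately.
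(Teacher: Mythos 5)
Your proof is correct and follows exactly the paper's route: the paper simply asserts that this corollary ``follows immediately from Corollary \ref{maincor},'' and your argument is the routine filling-in of that implication (counting the $n-1$ remaining vertices, noting each is a leaf by Corollary \ref{maincor}, and using connectedness to force each such leaf to hang off a neighbor of $v$). No gap; the approaches coincide.
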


Theorem \ref{main} follows from the next result, which we prove by inspection using SageMath calculations.

\begin{prop} \label{lastprop}
If $\nu$ is a partition of the integer nine, then $X_{T(\nu)}$ is not Schur positive.
\end{prop}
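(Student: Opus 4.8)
The plan is to use Corollary \ref{maincor2} to turn the problem into a finite check: it suffices to compute $X_{T(\nu)}$ and test it for Schur positivity as $\nu$ ranges over the $p(9)=30$ partitions of nine. First I would set up these trees explicitly. Each $T(\nu)$ has radius two: a center $v$ of degree ten, its neighbors $v_1,\dots,v_{10}$, and nine further leaves distributed so that $v_i$ carries $\nu_i$ of them (while $v_{t+1},\dots,v_{10}$ are themselves leaves). This description is symmetric in the leaves hanging from a common $v_i$ and in the $v_i$ sharing a common value $\nu_i$, a symmetry I would exploit to keep each computation small.

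For each $\nu$ I would compute $X_{T(\nu)}$ and expand it in the Schur basis, aiming to exhibit a single strictly negative coefficient. Concretely, I would obtain the monomial expansion first — either from the edge-subset formula $X_G=\sum_{S\subseteq E}(-1)^{|S|}p_{\lambda(S)}$, which for the nineteen edges of $T(\nu)$ collapses dramatically once one sums over orbits of the automorphism group, or directly from the identity $X_G=\sum_{\pi}\tilde m_{\lambda(\pi)}$ summed over stable partitions $\pi$ — and then apply the monomial-to-Schur transition. Schur positivity then fails for $T(\nu)$ as soon as some coefficient $\langle X_{T(\nu)},s_\lambda\rangle$ with $\lambda\vdash 20$ comes out negative.

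A cheaper test worth trying first, for as many of the thirty trees as possible, is the dominance obstruction of Lemma \ref{domlem}. Since $T(\nu)$ is bipartite with parts of sizes ten and ten, Schur positivity would force a stable partition of every type $\mu\vdash 20$ with $\mu\preceq(10,10)$, which here means exactly that the largest part of $\mu$ is at most ten. For a given $\nu$ it may happen that some such $\mu$ admits no stable partition of $T(\nu)$ — a purely combinatorial, coloring-style condition that is quick to certify — and this alone rules out Schur positivity without computing any Schur coefficient. I would dispatch whatever trees I can this way and reserve the full Schur expansion for the remainder.

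The main obstacle is that there is no general theorem predicting which Schur coefficient turns negative, nor any guarantee that one witness $\lambda$ (or one dominance obstruction) works uniformly across all thirty partitions $\nu$; the argument is unavoidably case-by-case. The real work is therefore organizational: to carry out the thirty expansions so that the outcome is reproducible, to record for each $\nu$ an explicit $\lambda$ with $\langle X_{T(\nu)},s_\lambda\rangle<0$ (or an explicit infeasible dominated type), and to confirm consistency with the cases $n\le 19$ verified in \cite{DSW}, so that $2n=20$ is genuinely the first place the phenomenon appears.
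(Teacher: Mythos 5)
Your proposal is correct and takes essentially the same route as the paper: Proposition \ref{lastprop} is proved there precisely by computing, via SageMath, the Schur expansion of $X_{T(\nu)}$ for each of the thirty partitions $\nu$ of nine and exhibiting negative coefficients in every case (at $s_{(9,9,2)}$ when $\nu_1 \geq 6$ and at $s_{(3,3,2,2,2,2,2,2,2)}$ when $\nu_1 \leq 5$). Your preliminary dominance-order filter via Lemma \ref{domlem} is an optional pruning step the paper does not pursue, but since you correctly hedge on its applicability and fall back on the full Schur computation, the substance of your argument coincides with the paper's.
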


Our computations reveal in particular that if $n=10$ and $\nu_1 \geq 6$, then the coefficient of $s_{(9,9,2)}$ in the Schur expansion of $X_{T(\nu)}(\x)$ is negative; and if $n=10$ and $\nu_1 \leq 5$, then the coefficient of $s_{(3,3,2,2,2,2,2,2,2)}$ in the Schur expansion of $X_{T(\nu)}(\x)$ is negative.  This Schur expansion has can have as few as four negative coefficients (when $\nu$ is one of $(6,2,1)$, $(6,1,1,1)$ or $(5,4)$) and as many as thirty (when $\nu$ is one of $(2,2,2,2,1)$, $(2,2,2,1,1,1)$ or $(1,1,1,1,1,1,1,1,1)$).
%The number ${\mathsf Neg}(\nu)$ of negative coefficients in the Schur expansion of $X_{T(\nu)}(\x)$ for every partition $\nu$ of nine is given in the table below.  
%we write, for example, $1^22^23$ for the partition $(3,2,2,1,1)$.  
Our programs, along with the complete Schur expansion of $X_{T(\nu)}(\x)$ for each partition $\nu$ of nine, can be found at \url{https://github.com/emmanuellasa/Schur_Decomposition_20}.

%$$
%\begin{array}{|c|c|c|c|c|c|c|c|c|c|c|c|c|c|c|c|} 
%\nu & 9 & 81 & 72 & 71^2 & 63 & 621 & 61^3 & 54 & 531 & 52^2 & 521^2 & 51^4 & 4^21 & 432 & 431^2 \\ {\mathsf  Neg}(\nu) & & & & & & & & & & & & & & &  
%\end{array}
%$$

\section*{Acknowledgement} We thank Stephanie van Willigenburg for helpful comments.

\end{document}